\newtheorem{theorem}{Theorem}[section]
\newtheorem{lemma}[theorem]{Lemma}
\newtheorem{prop}[theorem]{Proposition}
\newtheorem{cor}[theorem]{Corollary}
\theoremstyle{definition}
\newtheorem{definition}[theorem]{Definition}
\theoremstyle{remark}
\numberwithin{equation}{section}
\newcommand{\cl}{\mathcal}
\theoremstyle{plain}
\newcommand{\adef}{\begin{definition}}
\newcommand{\zdef}{\end{definition}}
\newcommand{\R}{\mathbb{R}}
\newcommand{\N}{\mathbb{N}}
\newcommand{\aproof}{\begin{proof}}
\newcommand{\zproof}{\end{proof}}
\begin{document}

\title{Derivation of vector-valued complex interpolation scales}


\author{Jes\'us M. F. Castillo}
\address{Instituto de Matem\'aticas Imuex, Universidad de Extremadura,
Avenida de Elvas s/n, 06011 Badajoz,  Espa\~{n}a}
\curraddr{}
\email{castillo@unex.es}

\thanks{The research of the first and third author was supported by Project MTM2016-76958-C2-1-P and Project IB16056 de la Junta de Extremadura.}

\thanks{The authors are indebted to M. Cwikel for several accurate comments that helped to improve the paper}
\author{Daniel Morales}

\address{Departamento de Matem\'aticas, Universidad de Extremadura,
Avenida de Elvas s/n, 06011 Badajoz, Spain}
\email{ddmmgg1993@gmail.com}

\author{Jes\'us Su\'arez de la Fuente}
\address{Instituto de Matem\'aticas Imuex, Universidad de Extremadura,
Avenida de Elvas s/n, 06011 Badajoz,  Espa\~{n}a.}
\email{jesus@unex.es}

\subjclass[2010]{Primary 46B70, 46B45, 46B80 }

\date{}

\dedicatory{}


\begin{abstract} We study complex interpolation scales obtained by vector valued amalgamation and the derivations they generate. We study their trivial and singular character and obtain examples showing that the hypotheses in the main theorems of [J.M.F. Castillo, V. Ferenczi and M. Gonz\'alez, \emph{Singular exact sequences generated by complex interpolation},  Trans. Amer. Math. Soc. 369 (2017) 4671--4708] are not necessary. \end{abstract}

\maketitle

\section{Introduction}
\label{sect:intro}

In this paper we study interpolation scales of vector valued functions, the derivations they induce and some of their basic properties, mainly nontriviality and  singularity. Our examples show, in particular, that the hypothesis of the main results in [J.M.F. Castillo, V. Ferenczi and M. Gonzalez, \emph{Singular exact sequences generated by complex interpolation},  Trans. Amer. Math. Soc. 369 (2017) 4671--4708] are not necessary. A special attention is payed to the derivations obtained by amalgamation or fragmentation, in the spirit of the Enflo, Lindenstrauss and Pisier construction. Indeed, the first twisted Hilbert space was obtained by Enflo, Lindenstrauss and Pisier \cite{elp}. It has the form $\ell_2(F_n)$ for a specific sequence $F_n$ of finite-dimensional Banach spaces. Even if it is not known whether the Enflo-Lindenstrauss-Pisier space can be obtained by derivation, we will obtain and study fragmented scales whose derived spaces have a similar form. To give just one example, fragmentation of the scale $(\ell_1, \ell_\infty)$ yields the scale $(\ell_2(\ell_1^n), \ell_2(\ell_\infty^n))$, which generates the derived space $\ell_2(Z_2(n))$, where $Z_2(n)$ is the fragmentation of the Kalton-Peck space. And while the Kalton-Peck sequence is strictly singular, the fragmented Kalton-Peck sequence is ``strictly non-singular" (we thank F. Cabello for this name).\medskip

The general theory yields that an admissible couple $(X_0, X_1)$ of Banach spaces for which complex interpolation at $\theta$ yields the space $X_\theta$ generates an exact sequence
\begin{equation*}\label{twix}
\begin{CD}
0@>>> X_\theta@>{j}>> d X_\theta @>{q}>> X_\theta @>>>0\end{CD}
\end{equation*}
The middle space $d X_\theta$ in (\ref{twix}) is called the derived space of the scale $(X_0, X_1)$ at $\theta$. It is especially interesting when $X_\theta = \ell_2$, in which case the space $dX_\theta$ is called a twisted Hilbert space (see below). The exact sequence (\ref{twix}) is said to be {\it trivial} when $j(X_\theta)$ is complemented in
$dX_\theta$. The exact sequence is called {\it singular} when the operator $q$ is strictly singular, which means that its restrictions to infinite
dimensional closed subspaces are never an isomorphism. .\medskip

A drawback in the theory is the scarcity of examples. While it is relatively easy to get $(X, X^*)_{1/2}=\ell_2$, it is rather difficult to calculate the associated derivation and study its properties. The paper \cite{cabe2} presents a complete description of the derivations that appear when considering scales of Lorentz spaces, while the paper \cite{cfg} performs a thorough study of singular derivations. In this paper we continue the previous work by obtaining new examples of derivations, study their properties and show that the hypotheses in the main theorems of \cite{cfg} are actually not necessary.

\section{Exact sequences, twisted sums and centralizers}
\label{sect:twisted-sums}

A {\it twisted sum} of two Banach spaces $Y$ and $Z$ is a Banach space $X$ which has a subspace
isomorphic to $Y$ with the quotient $X/Y$ isomorphic to $Z$. An exact sequence $0 \to Y \to X \to Z \to 0$ of Banach spaces and linear continuous operators is a diagram in which the kernel of each arrow coincides with the image of the preceding one.
By the open mapping theorem this means that the middle space $X$ is a twisted sum
of $Y$ and $Z$.\medskip

A special type of exact sequences appear generated by the complex interpolation method when applied to a pair of spaces as we describe now. A sound background on complex interpolation can be found in \cite{BL,kaltonmontgomery}. Let $\mathbb S$ denote the open strip $\{z\in \mathbb C: 0< {\rm Re}(z)< 1\}$ in the complex
plane, and let $\overline{\mathbb S}$ be its closure. Given an admissible (i.e., a pair that we assume linear and continuously embedded into a Hausdorff topological vector space $W$) pair $(X_0,X_1)$ of complex Banach spaces, let $\Sigma = X_0+X_1$ endowed with the norm $\|x\|=\inf \{\|x_0\|_0 + \|x_1\|_1: x= x_0 + x_1\}$. We denote by $\cl{F}=\cl F(X_0,X_1)$
the space of functions $g:\overline{\mathbb S}\to \Sigma$ satisfying the following conditions:
\begin{enumerate}
\item $g$ is $\|\cdot\|_\Sigma$-bounded and $\|\cdot\|_\Sigma$-continuous on $\overline{\mathbb S}$,
and $\|\cdot\|_\Sigma$-analytic on $\mathbb S$;

\item $g(it)\in X_0$ for each $t\in\R$, and the map $t\in\R\mapsto g(it)\in X_0$ is bounded
and continuous;

\item $g(it+1)\in X_1$ for each $t\in\R$, and the map $t\in\R\mapsto g(it+1)\in X_1$ is bounded
and continuous.
\end{enumerate}

The space $\cl{F}$ is a Banach space under the norm
$\|g\|_\cl{F} = \sup\{\|g(j+it)\|_j: j=0,1; t\in\R \}$.
For $\theta\in[0,1]$, define the interpolation space
$$
X_\theta=(X_0,X_1)_\theta=\{x\in\Sigma: x=g(\theta) \text{ for some } g\in\cl F\}
$$
with the norm $\|x\|_\theta=\inf\{\|g\|_\cl{F}: x=g(\theta)\}$.
So, if $\delta_\theta: \mathcal F\to \Sigma$ denotes the obvious evaluation map $\delta_\theta(f)=f(\theta)$ then $(X_0,X_1)_\theta$ is the quotient of $\cl{F}$ by $\ker\delta_\theta$, and thus it is a Banach space.  For  $0<\theta<1$, we will consider the maps
$\delta_\theta: \mathcal F \to \Sigma$ (evaluation of the function at $\theta$) and
$\delta_\theta': \mathcal F\to \Sigma$ (evaluation of the derivative at $\theta$). Let $B: X_\theta \to \mathcal F$ be a bounded homogeneous selection for $\delta_\theta$ and set
$$d_{\delta_\theta' B}X_\theta = \{(y,z)\in \Sigma  \times X_\theta\, :\,
 y - \delta_\theta' B z\in X_\theta\}
$$
endowed with the quasi-norm
$\|(y,z)\| = \|y -\delta_\theta' B z\|_{X_\theta} +\|z\|_{X_\theta}$.
Under rather general conditions it can be shown that $\|\cdot\|$ is equivalent to a norm. The space $d_{\delta_\theta' B}X_\theta$ is a twisted sum of $X_\theta$ since the embedding $y\to (y,0)$ and quotient map $(y,z) \to z$ yield an exact sequence
$$
\begin{CD}
0@>>> X_\theta @>>> d_{\delta_\theta' B}X_\theta @>>> X_\theta@>>>0
\end{CD}
$$ The map $\Omega_\theta = \delta_\theta' B$ is called the associated derivation. Two different homogeneous bounded selectors $B$ and $V$ yield different derivations although their difference is a bounded map
$\delta_\theta' B - \delta_\theta' V: X_\theta \to X_\theta$, and consequently the spaces
$d_{\delta_\theta' B}X_\theta$ and $d_{\delta_\theta' V}X_\theta$ are isomorphic. When no confusion arises we will call this space $dX_\theta$.\medskip

The key example on which the theory is modeled is the Kalton-Peck twisted Hilbert
space $Z_2$ obtained in \cite{kaltpeck}, which provides the first example of a singular
sequence
$$
\begin{CD}
0@>>> \ell_2@>>> Z_2 @>>> \ell_2 @>>>0.
\end{CD}
$$The space $Z_2$ is the derived space $d\ell_2$ with respect to the scale $(\ell_1, \ell_\infty)$ at $1/2$. The map $\Omega_\theta$ is a homogeneous function $\Omega_\theta: X_\theta\to X_0+ X_1$ with the additional property that there is a constant $C>0$ such that $\|\Omega_\theta(x+y) - \Omega_\theta(x) - \Omega_\theta(y)\|\leq C(\|x\|+\|y\|)$ for all $x,y\in X_\theta$. Such a map is usually called the derivation at $\theta$ (with respect to the given interpolation scale), and can be used to completely describe the derived space and the induced exact sequence. Moreover, an outstanding theorem of Kalton \cite{kaltdiff} establishes a very tight connection between interpolation scales and derivations while one maintains inside the domain of K\"othe spaces, that can be described as follows: Let $X$ be a K\"othe function space on a measure space $M$. A homogeneous map $\Omega$ on $X$ is said to be a centralizer if there is a constant $C>0$ such that $\|\Omega(\xi x) - \xi\Omega(x) \|\leq C\|\xi\|\|x\|$ for all $x\in X$ and $\xi\in L_\infty(M)$. Derivations on K\"othe spaces are centralizers. And Kalton's theorem asserts that given a centralizer $\Omega$ on $X$ satisfying a few technical requirements, there is a couple of K\"othe spaces $X_0, X_1$ such that $X=(X_0, X_1)_{1/2}$ and $\Omega - \Omega_{1/2}$ is bounded. This means that, in practice, one can replace $\Omega$ by $\Omega_{1/2}$.

An example is in order: fix the couple $(\ell_{p_0}, \ell_{p_1})$ and set $\frac{1}{p}= \frac{1-\theta}{p_1}+ \frac{\theta}{p_0}$. The map $B(x)(z) = x|x|^{(\frac{p}{p_1} - \frac{p}{p_0})(z-\theta)}$
is a homogeneous bounded selection for the evaluation map $\delta_{\theta}: \mathcal F\to \ell_{p}$ that yields the derivation $
(\frac{p}{p_1} - \frac{p}{p_0})x\log \frac{|x|}{\|x\|} = (\frac{p}{p_0} - \frac{p}{p_1})\mathscr K(x)  $, where $\mathscr K$ is the so-called Kalton-Peck map $\mathscr K(x)= x\log \frac{\|x\|}{|x|}$ for $x\neq 0$, and $0$ otherwise, that we assume defined only on finitely supported sequences.

\section{Fragmentation, reiteration, interpolation and derivation}
It is a well-known standard fact that when working with K\"othe spaces, in particular with Banach spaces with an unconditional basis, then complex interpolation becomes plain factorization (see \cite[Thm.4.6]{kaltonmontgomery}). In particular, the Lozanovskii decomposition formula allows us to show (see \cite[Theorem 4.6]{kaltonmontgomery}) that the complex interpolation space $X_\theta$ relative to a couple $(X_0, X_1)$ of K\"othe spaces (one of them with the Radon-Nikodym property) on the same measure space is isometric to the space $X_0^{1-\theta} X_1^\theta$,
with
$$
\|x\|_\theta=\inf \{\|y\|_0^{1-\theta}\|z\|_1^{\theta}: y\in X_0, z\in X_1,
|x|=|y|^{1-\theta}|z|^{\theta}\}.
$$
Reasoning as in \cite[p.1165]{kaltonmontgomery} (see also \cite[Section 3.3]{cfg}) one gets that if $a_0(x),a_1(x)$ is an almost optimal Lozanovskii decomposition for $x$ then \begin{equation}\label{kpgeneralized}
\Omega_\theta(x)  = x\, \log \frac{|a_1(x)|}{|a_0(x)|}.
\end{equation}

So let $L$ be a Banach space with an unconditional shrinking basis, so that it is a common unconditional basis for $L$ and $L^*$. Given a finite subset $A\subset \N$, we define the finite dimensional space $L(A)$ as the subspace of $L$ formed by all elements with support contained in $A$ endowed with the norm  $\|x\|_{L(A)}=\|1_Ax\|_L$. One has:

\begin{lemma}\label{frag}  $\left(L(A), {L^*(A)}\right)_\theta  = {(L, L^*)_\theta}(A)$ with derivation $\Omega_A(x)= 1_A\Omega(1_Ax)$.\end{lemma}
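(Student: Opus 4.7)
The plan is to bypass the $\mathcal F$-construction entirely and work instead with the Lozanovskii product and the explicit derivation formula (\ref{kpgeneralized}), both valid because $L$ and $L^*$ are K\"othe spaces sharing an unconditional basis.

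For the identification of the interpolation space I would argue as follows. Let $x$ be supported on $A$ and take any Lozanovskii decomposition $|x|=|y|^{1-\theta}|z|^\theta$ with $y\in L$ and $z\in L^*$. Since $x$ vanishes off $A$, so does $|y|^{1-\theta}|z|^\theta$, hence the truncated pair $(1_A y, 1_A z)$ provides another valid decomposition of $x$. By unconditionality of the common basis, $\|1_A y\|_L\le \|y\|_L$ and $\|1_A z\|_{L^*}\le \|z\|_{L^*}$, so the Lozanovskii infimum for $x$ is attained (up to $\varepsilon$) over decompositions that live in $L(A)\times L^*(A)$. The latter are exactly the decompositions available when computing $\|x\|$ in $(L(A), L^*(A))_\theta$, so the two norms coincide isometrically and $(L(A),L^*(A))_\theta = (L,L^*)_\theta(A)$ as Banach spaces.

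For the derivation I would fix a homogeneous almost-optimal Lozanovskii selection $x\mapsto (a_0(x), a_1(x))$ in $(L, L^*)$ and use (\ref{kpgeneralized}) to write $\Omega(x)=x\log(|a_1(x)|/|a_0(x)|)$. When $x$ is supported on $A$, the post-composed selection $x\mapsto (1_A a_0(x), 1_A a_1(x))$ is almost optimal for the couple $(L(A), L^*(A))$ by the previous step; plugging it into (\ref{kpgeneralized}) for the restricted scale gives $x\log(|1_A a_1(x)|/|1_A a_0(x)|)=1_A\Omega(1_A x)$. Because derivations associated with different bounded homogeneous selectors differ only by a bounded map, this identifies $\Omega_A=1_A\Omega(1_A\cdot\,)$ as the derivation of $(L(A),L^*(A))_\theta$ within its natural equivalence class.

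The main obstacle I foresee is not conceptual but technical: one must know that a homogeneous almost-optimal Lozanovskii selection exists and that post-composing it with multiplication by $1_A$ preserves almost-optimality. This is precisely where the unconditional-basis hypothesis is used in an essential way; once the truncation estimate of the first step is in place, the matching of selectors between the two scales is automatic and the rest reduces to reading off (\ref{kpgeneralized}).
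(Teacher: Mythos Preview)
Your argument is correct. For the identification $(L(A),L^*(A))_\theta=(L,L^*)_\theta(A)$ you reproduce essentially the paper's own proof: both use the Lozanovskii product $X_0^{1-\theta}X_1^\theta$ and the observation that truncation by $1_A$ can only improve a factorization, so the two infima coincide.

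For the derivation you take a slightly different route. The paper stays inside the Calder\'on space $\mathcal F$: given a bounded homogeneous selector $B$ for $\delta_\theta$ on the full scale, it sets $B_A(x)=1_A B(1_A x)$, checks this is a selector for the restricted scale, and differentiates to obtain $\Omega_A(x)=1_A\,\delta_\theta' B(1_A x)=1_A\Omega(1_A x)$. You instead invoke the explicit Lozanovskii formula~(\ref{kpgeneralized}) and truncate the almost-optimal decomposition. Both arguments are short and valid; the paper's is marginally more robust because it needs nothing about $\Omega$ beyond its definition as $\delta_\theta' B$, whereas yours leans on formula~(\ref{kpgeneralized}), which is itself derived from an extremal in $\mathcal F$---so the ``bypass'' of $\mathcal F$ is more apparent than real. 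On the other hand, your version keeps everything at the level of the K\"othe-space factorization, which matches the spirit of the surrounding section.
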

\begin{proof} Since $(L, L^*)_\theta = L^{1-\theta}{L^*}^{\theta}$ one immediately gets
$(L(A), {L^*}(A))_\theta = L(A) ^{1-\theta}{{L^*}(A)}^{\theta}$ with equality of norms
\begin{eqnarray*}
\|x\|_{(L(A), {L(A)}^*)_\theta}&=&\inf\lbrace\|y\|_{L(A)}^{1-\theta}\|z\|_{L(A)^*}^\theta\rbrace\\
&=&\inf\lbrace\|1_Ay\|_{L}^{1-\theta}\|1_Az\|_{L^*}^\theta\rbrace\\
&=&\|1_Ax\|_{(L, L^*)_\theta}\\ &=&\|x\|_{{(L, L^*)_\theta}(A)}.
\end{eqnarray*}

To calculate the associated derivation, observe that if $B$ is a homogeneous bounded selection for  $\delta_\theta:\mathcal{F}(L,L^*)\rightarrow(L,L^*)_\theta$ then $B_A(x)=1_AB(1_Ax)$ is a bounded homogeneous selection of $\delta_\theta:\mathcal{F}(L(A),L^*(A))\rightarrow(L,L^*)_\theta(A)$. Thus, the associated derivation is now $\Omega_A(x)={\delta'}_\theta B_A(x)=1_A{\delta'}_\theta B(1_Ax)=1_A\Omega(1_Ax)$.\end{proof}

To see what occurs when the pieces $L(A_n)$ are pasted together using a Banach sequence space $\lambda$ we have to examine the behaviour of vector sums of Banach spaces under interpolation and derivation. Let $\lambda$ be a Banach space with a $1$-unconditional basis $(e_n)$. Given a  Banach space $X$ one can form the Banach space
$$\lambda(X) = \left\{ (x_n) \in X^\N:     \|(x_n)\| = \left \| \sum \|x_n\| e_n \right \|_\lambda <+\infty\right\}.$$
It is part of the folklore that if, moreover, $X$ has an unconditional basis then $\lambda(X)$ also has an unconditional basis. This is however false when one considers an arbitrary sequence $(X_n)$ of Banach spaces since it is well-known that the space $\ell_2(L_{p_n})$ has no unconditional basis when $\lim p_n=1$ \cite[p.27]{lindtzaf}. It is well-known \cite[Section 5.1]{BL} that $(\ell_{p_0}(X_0), \ell_{p_1}(X_1))_\theta = \ell_{p}((X_0, X_1)_\theta)$ for $\frac{1}{p} =\frac{1-\theta}{p_0} + \frac{\theta}{p_1}$. Our purpose now is to obtain a generalized form of this result and calculate the associated derivation.\medskip

Calder\'on's paper \cite{calde} contains a rather general interpolation result for vector sums that we describe now. Let $\Lambda$ be a K\"othe space defined on a measure space $M$. Given a Banach space $X$ one can form the vector valued space Banach $\Lambda(X)$ of measurable functions $f: M\to X$ such that the function $\widehat f (\cdot) = \|f( \cdot)\|_X: M\to \R$ given by $t \to \|f(t)\|_X$ is in $\Lambda$, endowed with the norm $\| \|f(\cdot)\|_X \|_\Lambda$. 

\begin{theorem}\label{Cinterpolation} Fix $0<\theta<1$. Let $(\lambda_0, \lambda_1)$ an interpolation couple of Banach lattices on the same measure space for which
$(\lambda_0, \lambda_1)_\theta = \lambda_0^{1-\theta}\lambda_1^{\theta}$ with associated derivation $\omega_\theta$. Let $(X_0, X_1)$ be an interpolation couple of Banach spaces with associated derivation $\Omega_\theta$ at $\theta$. Assume that $\lambda_0(X_0)$ is reflexive. Then $$(\lambda_0(X_0), \lambda_1(X_1))_\theta = \lambda_0^{1-\theta}\lambda_1^\theta \left ((X_0, X_1)_\theta\right)$$
with associated derivation $\Phi_\theta$ defined on the dense subspace of simple functions as follows: given $f = \sum_{n=1}^N a_n 1_{A_n}$ then
$$\Phi_\theta \left (f \right )= \omega_\theta\left ( \widehat f(\cdot)\right ) \sum_{n=1}^N \frac{a_n}{\|a_n\|}1_{A_n}  + \sum_{n=1}^N  \Omega_\theta (a_n) 1_{A_n}.$$
\end{theorem}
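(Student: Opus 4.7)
The plan is to establish both the identification $(\lambda_0(X_0),\lambda_1(X_1))_\theta = \lambda_0^{1-\theta}\lambda_1^\theta(X_\theta)$ and the formula for the associated derivation simultaneously, by exhibiting an explicit bounded homogeneous selector $B$ for the evaluation map $\delta_\theta\colon \mathcal F(\lambda_0(X_0),\lambda_1(X_1)) \to \lambda_0^{1-\theta}\lambda_1^\theta(X_\theta)$, defined on the (dense, thanks to reflexivity of $\lambda_0(X_0)$) subspace of simple functions. Then $B(f)(\theta)=f$ gives the norm equality, $B(f)'(\theta)$ gives the derivation, and since two bounded homogeneous selectors produce derivations differing by a bounded map, this computation determines $\Phi_\theta$ modulo a bounded perturbation, which is all that is needed.

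Fix a simple $f=\sum_{n=1}^N a_n 1_{A_n}$ with disjoint $A_n$ and $a_n\in X_\theta\setminus\{0\}$. Its scalar companion $\widehat f = \sum_n \|a_n\|_{X_\theta} 1_{A_n}$ lies in $\lambda_0^{1-\theta}\lambda_1^\theta=(\lambda_0,\lambda_1)_\theta$, so pick a Lozanovskii decomposition $\widehat f = y^{1-\theta}z^{\theta}$ with $y\in\lambda_0$, $z\in\lambda_1$ nonnegative, and set $g(w)=y^{1-w}z^{w}$; this is a Calderón function for $\widehat f$ with $g(\theta)=\widehat f$ and $g'(\theta)=\omega_\theta(\widehat f)$ (by hypothesis on $\omega_\theta$). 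For each $n$, fix an almost-optimal $h_n\in\mathcal F(X_0,X_1)$ with $h_n(\theta)=a_n$ and $h_n'(\theta)=\Omega_\theta(a_n)$. Define
\[
B(f)(w) \;=\; \sum_{n=1}^N \frac{h_n(w)\, g(w)}{\|a_n\|_{X_\theta}}\,1_{A_n}.
\]
Clearly $B(f)(\theta)=f$. Using $|g(it)|=y$, $\|h_n(it)\|_{X_0}\le(1+\varepsilon)\|a_n\|_{X_\theta}$, and the lattice structure of $\lambda_0$, one gets $\|B(f)(it)\|_{\lambda_0(X_0)}\le(1+\varepsilon)\|y\|_{\lambda_0}$ and symmetrically on the right edge. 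The standard $e^{\mu(w-\theta)}$ rescaling equalises the two boundary norms and yields $\|f\|_\theta\le(1+\varepsilon)\|y\|_{\lambda_0}^{1-\theta}\|z\|_{\lambda_1}^\theta$; taking infimum over Lozanovskii decompositions of $\widehat f$ closes the inequality $\|f\|_\theta\le\|\widehat f\|_{(\lambda_0,\lambda_1)_\theta}$. The reverse bound uses slicing: for any admissible $G$ with $G(\theta)=f$, the function $w\mapsto G(w)(s)$ is itself a Calderón function for $f(s)\in X_\theta$, giving $\widehat f(s)\le \|G(\cdot)(s)\|_{\mathcal F(X_0,X_1)}$ pointwise, and combining with the scalar identification $(\lambda_0,\lambda_1)_\theta=\lambda_0^{1-\theta}\lambda_1^\theta$ produces the converse inequality.

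The derivation formula now drops out by differentiation:
\[
B(f)'(w) \;=\; \sum_{n=1}^N \frac{h_n'(w)g(w)+h_n(w)g'(w)}{\|a_n\|_{X_\theta}}\,1_{A_n}.
\]
Evaluating at $w=\theta$, and using $g(\theta)|_{A_n}=\|a_n\|_{X_\theta}$, $g'(\theta)=\omega_\theta(\widehat f)$, $h_n(\theta)=a_n$, and $h_n'(\theta)=\Omega_\theta(a_n)$, one obtains
\[
\Phi_\theta(f) \;=\; \omega_\theta(\widehat f)\sum_{n=1}^N\frac{a_n}{\|a_n\|_{X_\theta}}\,1_{A_n} \;+\; \sum_{n=1}^N \Omega_\theta(a_n)\,1_{A_n},
\]
which is exactly the claimed expression.

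The main obstacle I expect is not the algebra but two technical points: first, checking that $B(f)$ really belongs to $\mathcal F(\lambda_0(X_0),\lambda_1(X_1))$ (analyticity and $\|\cdot\|_\Sigma$-continuity on the strip, which follows because each factor is well-behaved and the support is a finite disjoint union); second, the pointwise slicing argument in the reverse inequality, which is where reflexivity of $\lambda_0(X_0)$ really enters---it guarantees order-continuity of the norm on $\lambda_0$, hence density of simple functions, and underwrites the interchange of the $\mathcal F$-norm with the vector-valued integration used in Calderón's original vector-valued interpolation argument.
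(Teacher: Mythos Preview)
Your approach is essentially the same as the paper's: the extremal $B(f)(w)=\sum_n \frac{h_n(w)g(w)}{\|a_n\|}1_{A_n}$ you construct is exactly Calder\'on's formula (5) (written there as $B(\widehat f)\sum B(a_n/\|a_n\|)1_{A_n}$), and your product-rule differentiation at $w=\theta$ matches the paper's computation line for line. The only difference is that where you sketch both norm inequalities directly, the paper simply cites Calder\'on \cite[13.6, 33.6]{calde} for the identification of the interpolation space and for the form of the extremal.
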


\begin{proof} The identification of the interpolation space is Calder\'on's vector-valued interpolation formula \cite[13.6 (i),(ii)]{calde}. To obtain the derivation we must go to Calder\'on's proof \cite[33.6]{calde}. Let $B(\nu)$ and $B(x)$ be extremals for $\nu \in (\lambda_0, \lambda_1)_\theta$ and $x\in (X_0, X_1)_\theta$, respectively, so that $B(\nu)'(\theta)=\omega_\theta(\nu)$ and $B(x)'(\theta)=\Omega_\theta(x)$. Then formula (5) in \cite{calde} establishes that an extremal for $f\in (\lambda_0(X_0), \lambda_1(X_1))_\theta$ is given by
$$Fx = B(\widehat f)\sum B \left (\frac{a_n}{\|a_n\|}\right) 1_{A_n}.$$
Therefore
\begin{eqnarray*}
\Phi_\theta(f) &=& (Fx)'(\theta)\\
&=& B(\widehat f)'(\theta) \sum B \left (\frac{a_n}{\|a_n\|}\right)(\theta) 1_{A_n} + B(\widehat f)(\theta) \sum B \left (\frac{a_n}{\|a_n\|}\right)'(\theta) 1_{A_n}\\
&=& \omega(\widehat f)\sum \frac{a_n}{\|a_n\|}1_{A_n} + \widehat f \sum \Omega_\theta \left (\frac{a_n}{\|a_n\|}\right)1_{A_n}\\
&=& \omega(\widehat f)\sum \frac{a_n}{\|a_n\|}1_{A_n} + \sum \Omega_\theta (a_n)1_{A_n}
\end{eqnarray*}
by the homogeneity of $\Omega_\theta$.\end{proof}

A particularly interesting case occurs when the spaces $\lambda_j$, $j=0,1$ are $p_j$-convexifications of the same Banach space $\lambda$ with unconditional basis. Precisely, recall that given a Banach space $\lambda$ with a 1-unconditional basis $(e_n)$ and given $1\leq p<+\infty$, its $p$-convexification $\lambda_p$ is defined as the completion of the space of finitely supported sequences endowed with the norm
$$\left \|  \sum_{n=1}^\infty\lambda_n e_n  \right \|_{\lambda_p} = \left \|  \sum_{n=1}^\infty |\lambda_n|^p e_n  \right \|_{\lambda}^{1/p}.$$
We need an improved version of the interpolation inequality:

\begin{lemma}\label{cuenta} Let $\lambda$ be a Banach space with a K-unconditional basis $(e_n)$ and $\frac{1}{p}=\frac{1-\theta}{p_0}+\frac{\theta}{p_1}$. One has
$$\left \| \sum |a_n|^{1-\theta} |b_n|^{\theta}e_n\right \|_{\lambda_p} \leq K^{1/p} \left \| \sum |a_n|e_n\right \|_{\lambda_{p_0}}^{1-\theta} \left \| \sum |b_n|e_n\right \|_{\lambda_{p_1}}^{\theta}.$$
\end{lemma}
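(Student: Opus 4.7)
The plan is to reduce the claim, by raising both sides to the $p$-th power and using the defining identity of the $r$-convexification $\|\sum c_n e_n\|_{\lambda_r}^{r}=\|\sum|c_n|^{r}e_n\|_{\lambda}$, to a Young-type geometric mean inequality inside the base lattice $\lambda$. Specifically, setting $A_n=|a_n|^{p_0}$, $B_n=|b_n|^{p_1}$, $\alpha=(1-\theta)p/p_0$ and $\beta=\theta p/p_1$, the claim becomes equivalent to
$$\Bigl\|\sum A_n^{\alpha}B_n^{\beta}e_n\Bigr\|_{\lambda}\le K\Bigl\|\sum A_n e_n\Bigr\|_{\lambda}^{\alpha}\Bigl\|\sum B_n e_n\Bigr\|_{\lambda}^{\beta},$$
and the hypothesis $\frac{1}{p}=\frac{1-\theta}{p_0}+\frac{\theta}{p_1}$ is exactly what yields $\alpha+\beta=1$.

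To prove the reduced inequality I would write $a=\|\sum A_n e_n\|_{\lambda}$ and $b=\|\sum B_n e_n\|_{\lambda}$, and apply the weighted scalar AM--GM inequality $u^{\alpha}v^{\beta}\le\alpha u+\beta v$ (valid for $\alpha+\beta=1$, $u,v\ge 0$) pointwise to $u=A_n/a$ and $v=B_n/b$ to obtain
$$A_n^{\alpha}B_n^{\beta}\le a^{\alpha}b^{\beta}\Bigl(\alpha\frac{A_n}{a}+\beta\frac{B_n}{b}\Bigr).$$
The $K$-unconditionality of $(e_n)$ in $\lambda$ yields $K$-monotonicity of the lattice norm by the standard convex combination of signs argument. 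Combining this with the triangle inequality gives
$$\Bigl\|\sum A_n^{\alpha}B_n^{\beta}e_n\Bigr\|_{\lambda}\le K\,a^{\alpha}b^{\beta}(\alpha+\beta)=K\,a^{\alpha}b^{\beta},$$
and taking $p$-th roots recovers the desired constant $K^{1/p}$.

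The argument is essentially bookkeeping, and there is no real obstacle beyond placing the unconditional constant at the correct layer. The key point is that $(e_n)$ is automatically $1$-unconditional in every convexification $\lambda_r$, since signs vanish under $|\cdot|^{r}$; hence $K$ enters only through the norm of $\lambda$ itself and is diluted to $K^{1/p}$ when one returns to $\lambda_p$.
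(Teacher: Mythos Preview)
Your argument is correct and is precisely the ``rather straightforward'' computation the paper alludes to without giving details: raise to the $p$-th power to land in $\lambda$, use AM--GM coordinatewise after normalizing, and invoke the $K$-monotonicity that $K$-unconditionality provides. There is nothing to add.
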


The proof is rather straightforward. With this we can obtain the following version of Theorem \ref{Cinterpolation} without the additional hypothesis
that $\lambda_0(X_0)$ has to be reflexive.

\begin{theorem}\label{interpolation} Let $\lambda$ be a Banach space with a 1-unconditional basis, let $0<\theta<1$ and let $\frac{1}{p}=\frac{1-\theta}{p_0}+\frac{\theta}{p_1}$. If $(X_0, X_1)$ is an interpolation couple of Banach spaces with associated derivation $\Omega_\theta$ at $\theta$ then $$(\lambda_{p_0}(X_0), \lambda_{p_1}(X_1))_{\theta}= \lambda_p(X_{\theta})$$ with equality of norms and associated derivation $\Phi_\theta$ defined on finitely supported elements $a= \sum_{n=1}^N a_n e_n\in \lambda_p(X_\theta)$ as
$$\Phi_\theta \left (a \right )= \left(\frac{p}{p_1}-\frac{p}{p_0} \right)\sum_{n=1}^N a_n \log\frac{\|a_n\|}{\|a\|} e_n + \sum_{n=1}^N \Omega_{\theta} (a_n)e_n.$$
\end{theorem}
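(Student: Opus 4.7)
My plan is to prove both the identification of the interpolation space and the derivation formula at once, by writing down an explicit almost-extremal for each finitely supported $a\in\lambda_p(X_\theta)$ and then differentiating it at $z=\theta$. Fix $a=\sum_{n=1}^N a_n e_n$, set $\mu_n=\|a_n\|_{X_\theta}$ and $R=\|a\|_{\lambda_p(X_\theta)}=\|\sum_n\mu_n e_n\|_{\lambda_p}$. For $\epsilon>0$ pick a bounded homogeneous selector $B$ for $\delta_\theta\colon\mathcal F(X_0,X_1)\to X_\theta$ with $\|B(x)\|_{\mathcal F}\leq(1+\epsilon)\|x\|_{X_\theta}$, and put $F_n(z)=B(a_n/\mu_n)(z)$, so that $F_n(\theta)=a_n/\mu_n$ and $\|F_n(j+it)\|_{X_j}\leq 1+\epsilon$ for $j=0,1$. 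Introduce the scalar auxiliary
$$r(z)=(1-z)\frac{p}{p_0}+z\frac{p}{p_1},\qquad \phi_n(z)=R\bigl(\mu_n/R\bigr)^{r(z)},\qquad F(z)=\sum_{n=1}^N\phi_n(z)F_n(z)e_n.$$

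First I would verify the upper bound. Since $r(\theta)=1$, one has $F(\theta)=a$. Because $\Re r(it)=p/p_0$ and $\Re r(1+it)=p/p_1$, a direct computation gives $|\phi_n(j+it)|^{p_j}=R^{p_j-p}\mu_n^p$, and hence
$$\|F(j+it)\|_{\lambda_{p_j}(X_j)}^{p_j}\leq(1+\epsilon)^{p_j}\Bigl\|\sum_n|\phi_n(j+it)|^{p_j}e_n\Bigr\|_\lambda=(1+\epsilon)^{p_j}R^{p_j-p}\|\mu\|_{\lambda_p}^p=\bigl((1+\epsilon)R\bigr)^{p_j},$$
so $\|F\|_{\mathcal F}\leq(1+\epsilon)R$ and $\|a\|_{(\lambda_{p_0}(X_0),\lambda_{p_1}(X_1))_\theta}\leq\|a\|_{\lambda_p(X_\theta)}$. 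For the reverse inequality, given any $f\in\mathcal F(\lambda_{p_0}(X_0),\lambda_{p_1}(X_1))$ with $f(\theta)=a$, the projection $P_nf$ onto the $n$-th coordinate is an admissible function for $a_n\in X_\theta$, and after balancing $P_nf$ by a scalar factor $\alpha_n^{z-\theta}$ the three-lines lemma yields $\|a_n\|_{X_\theta}\leq U_n^{1-\theta}V_n^\theta$ with $U_n=\sup_t\|P_nf(it)\|_{X_0}$, $V_n=\sup_t\|P_nf(1+it)\|_{X_1}$. Plugging this into Lemma~\ref{cuenta} should close the argument. The main obstacle will be precisely this last step: the naive bound $\|\sum_n U_n e_n\|_{\lambda_{p_0}}\geq\sup_t\|f(it)\|_{\lambda_{p_0}(X_0)}$ goes the wrong way, so the $\sup_t$ cannot be pulled past the $\lambda_{p_0}$-norm directly. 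My plan is to instead apply Lemma~\ref{cuenta} pointwise in $t$ and close via a three-lines or Poisson-integral estimate on the resulting scalar function, or alternatively to dualize and run the upper bound on the dual scale.

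Finally, the derivation is read off by differentiating $F$ at $z=\theta$. The product rule gives
$$\Phi_\theta(a)=F'(\theta)=\sum_{n=1}^N\bigl[\phi_n'(\theta)F_n(\theta)+\phi_n(\theta)F_n'(\theta)\bigr]e_n.$$
Substituting $\phi_n(\theta)=\mu_n$, $\phi_n'(\theta)=r'(\theta)\mu_n\log(\mu_n/R)=(p/p_1-p/p_0)\mu_n\log(\|a_n\|/\|a\|)$, $F_n(\theta)=a_n/\mu_n$ and $F_n'(\theta)=\Omega_\theta(a_n/\mu_n)=\Omega_\theta(a_n)/\mu_n$ (by homogeneity of $\Omega_\theta$), the two contributions recombine into
$$\Bigl(\frac{p}{p_1}-\frac{p}{p_0}\Bigr)\sum_{n=1}^N a_n\log\frac{\|a_n\|}{\|a\|}\,e_n+\sum_{n=1}^N\Omega_\theta(a_n)\,e_n,$$
which is exactly the claimed formula.
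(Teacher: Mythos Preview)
Your approach is essentially the same as the paper's. After a short algebraic rewriting your extremal $F(z)=\sum_n\phi_n(z)F_n(z)e_n$ coincides with the paper's $f(z)=\sum_n g_n(z)\bigl(\|a_n\|/\|a\|\bigr)^{p(1/p_0-1/p_1)(\theta-z)}e_n$: using homogeneity $B(a_n/\mu_n)=g_n/\mu_n$, one has $\phi_n(z)F_n(z)=(\mu_n/R)^{r(z)-1}g_n(z)$ and $r(z)-1=(z-\theta)(p/p_1-p/p_0)$. Your differentiation at $\theta$ and the resulting $\Phi_\theta$ are then identical to the paper's computation.

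On the interpolation identity itself, you go further than the paper: the paper declares the equality of norms ``rather standard using the inequality above'' (i.e.\ Lemma~\ref{cuenta}) and moves directly to the extremal, whereas you verify the upper bound explicitly and honestly flag the difficulty in the reverse inequality (the $\sup_t$ cannot be pulled outside the $\lambda_{p_j}$-norm). Your proposed fix via a Poisson-integral/three-lines argument on the scalar function $z\mapsto\bigl\|\sum_n\|P_nf(z)\|\,e_n\bigr\|$ is indeed the standard route and is what the paper is implicitly invoking; the dualization alternative also works once the upper bound is in hand. So there is no genuine gap in your plan---only the same step the paper leaves as routine.
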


\begin{proof} The interpolation part is rather standard using the inequality above. What is important for us is to observe that given $a=\sum_{n=1}^N a_ne_n$ with $a_n\in X_{\theta}$ then picking extremals (with respect to the couple $(X_0, X_1)$) $g_n$ such that $g_n(\theta)=a_n$ and $\|g_n\|\leq (1+\varepsilon)\|a_n\|_{X_{\theta}}$ one obtains the extremal (with respect to the
couple $(\lambda_{p_0}(X_0), \lambda_{p_1}(X_1))$)

$$f_n(z)=g_n(z)\left ( \frac{\|a_n\|_{X_{\theta}}}{\|a\|_{\lambda_p(X_{\theta})}}  \right)^{p\left( \frac{1}{p_0}-\frac{1}{p_1} \right)(\theta-z)}.$$

The associated derivation is therefore
$$\Phi_\theta (a) =f'(\theta)= \left(\frac{p}{p_1}-\frac{p}{p_0} \right)\sum_{n=1}^N g_n(\theta) \log \left(\frac{\|a_n\|_{X_{\theta}}}{\|a\|_{\lambda_p(X_{\theta})}} \right)e_n+ \sum_{n=1}^N g'_n(\theta)e_n.$$

Since there is no loss of generality in assuming that $g_n'(\theta)=\Omega_{\theta}(a_n)$, we are done.\end{proof}

The derivation obtained in Theorem \ref{Cinterpolation} matches that obtained in Theorem \ref{interpolation}: in the particular case in which $\lambda_0, \lambda_1$ are Banach spaces with unconditional basis and we set $1_{A_n}=e_n$; if, moreover, $\lambda_{p_0}$ (resp. $\lambda_{p_1}$) is the $p_0$ (resp. $p_1$) -convexification of $\lambda$ then $\omega$ is the ``vectorial Kalton-Peck" map $\left(\frac{p}{p_1}-\frac{p}{p_0} \right)\sum_{n=1}^N a_n \log\frac{\|a_n\|}{\|a\|} e_n$ which yields
\begin{eqnarray*} \Phi_\theta \left (f \right ) &=& \omega_\theta\left ( \widehat f(\cdot)\right ) \sum_{n=1} ^N \frac{a_n}{\|a_n\|}e_n  + \sum_{n=1}^N  \Omega_\theta (a_n) e_n\\
&=&\left(\frac{p}{p_1}-\frac{p}{p_0} \right)\sum_{n=1}^N \|a_n\| \log\frac{\|a_n\|}{\|a\|} e_n  \sum_{n=1}^N \frac{a_n}{\|a_n\|}e_n + \sum_{n=1}^N  \Omega_\theta (a_n) 1_{A_n}\\
&=&\left(\frac{p}{p_1}-\frac{p}{p_0} \right)\sum_{n=1}^N a_n \log\frac{\|a_n\|}{\|a\|} e_n +  \sum_{n=1}^N  \Omega_\theta (a_n) 1_{A_n}.
\end{eqnarray*}

When $\lambda$ is fixed and $(X_0, X_1)$ is an interpolation couple with associated derivation $\Omega_\theta$ at $\theta$ it comes as no surprise that the derivation associated to the scale $(\lambda(X_0), \lambda(X_1))$ is better described as
 $\lambda(\Omega_\theta)$, with the precise meaning we give it now. If $W$ is the ambient space associated to the couple $(X_0, X_1)$ then there is no loss of generality
by assuming that the ambient space associated to the couple $(X_0^N,X_1^N)$ is $W^N$ and the ambient space for $(\lambda(X_0), \lambda(X_1))$ is $W^\N$. Thus, the derivation associated to $(X_0^N,X_1^N)_\theta = {(X_0, X_1)_\theta}^N$ is ${\Omega_\theta}^N$ and therefore the derivation associated to $(\lambda(X_0), \lambda(X_1))_\theta = \lambda( (X_0, X_1)_\theta)$ is
$\lambda(\Omega_\theta)$ with the meaning that for every finitely supported $x= (x_1, \dots, x_N, 0,\dots) \in \lambda((X_0, X_1)_{\theta}) = (\lambda(X_0), \lambda (X_1))_{\theta}$ one has
$$\lambda(\Omega_\theta)(x) = (\Omega_\theta(x_1), \dots, \Omega_\theta(x_N), 0, \dots).$$

The role of $\lambda$ here is that whilst $\lambda(\Omega_\theta)$ takes values in $W^\N$, for every $x, y \in \lambda((X_0, X_1)_{\theta}) = (\lambda(X_0), \lambda (X_1))_{\theta}$ the Cauchy differences
$\lambda(\Omega_\theta)(x+y) - \lambda(\Omega_\theta)(x)  - \lambda(\Omega_\theta)(y)$ lie in $\lambda((X_0, X_1)_{\theta})$ and moreover
$\|\lambda(\Omega_\theta)(x+y) - \lambda(\Omega_\theta)(x)  - \lambda(\Omega_\theta)(y)\| \leq C (\|x\| + \|y\|)$.\medskip

Let us present a few tangible examples:\begin{itemize}
\item  Pick the couples $(\ell_p, \ell_{p^*})$ and $(\ell_{p^*}, \ell_p)$ (in reversed order) and let us calculate the derivation at $\ell_2(\ell_2)$. In the first case, the derivation at $1/2$ is $
\mathcal{K}(x)=\left(\frac{2}{p}-\frac{2}{p^*}\right)\sum_kx_k\log\dfrac{|x_k|}{\|x\|}u_k$
where $(u_k)$ denotes the canonical basis of $\ell_2$; in the second case, the derivation at $1/2$ is
$-\mathcal{K}(x)=\left(\frac{2}{p}-\frac{2}{p^*}\right)\sum_kx_k\log\dfrac{|x_k|}{\|x\|}e_k$. Thus, according to Theorem \ref{Cinterpolation}
interpolation between $\ell_p(\ell_{p*})$ and $\ell_{p^*}(\ell_p)$ at $1/2$ yields $\ell_2(\ell_2)$ with associated derivation at $a=\sum_{k=1}^Na_ku_k$ with $a_k\in \ell_2$ given by
$$
\Phi(a)=\left(\dfrac{2}{p}-\dfrac{2}{p^*}\right)\sum_{k=1}^N\left(a_k\log\dfrac{\|a_k\|}{\|a\|}-\sum_na_{k}(n)\log\dfrac{|a_k(n)|}{\|a_k\|}e_n\right)u_k.
$$
\item Pick the sequence of finite dimensional couples $(\ell_{p_n}^{k_n}, \ell_{p_n^*}^{k_n})$ and let us calculate the derived space at $\ell_2(\ell_2^{k_n})$. With all previously mentioned reservations the derivation can be written as
    $$\left( \left (\sum_{j=1}^{k_n} x_j^nu_j^n\right)_n\right) \longrightarrow \ell_2\left( \left(\frac{2}{p_n}-\frac{2}{p_n^*}\right)\sum_{j=1}^{k_n} x_j^n\log\dfrac{|x_j^n|}{\|\sum_{j=1}^{k_n} x_j^nu_j^n\|_2}u_j^n\right).$$
\item According to \cite{john} and  \cite[p.21]{pisier}, when $\lim p_n=2$ and $k_n\to \infty$  are adequately chosen the space $\ell_2\left( \ell_{p_n}^{k_n}\right)$ is asymptotically Hilbert and non-Hilbert. Proceeding as in \cite{suarez} one can show that the derived space is a twisted Hilbert asymptotically Hilbert space that is not Hilbert.

\item Let $(A_n)$ be a partition of $\N$, $\lambda$ a Banach space with an $1$-unconditional basis and $L$ a Banach space with a shrinking unconditional basis. Suppose that $\Omega_{A_n, \theta}$ is the centralizer associated to the scale $(L(A_n),L^*(A_n))$ at $\theta$. It follows as in Proposition \ref{interpolation} that $\big(\lambda(L(A_n)),\lambda(L^*(A_n))\big)_\theta=\lambda\big((L(A_n),L^*(A_n))_\theta\big)=\lambda\big((L,L^*)_\theta (A_n)\big)$ with associated centralizer is $\lambda(\Omega_{A_n, \theta})$ with the meaning that for every finitely supported $x= (x_1, \dots, x_N, 0,\dots) \in \lambda\big((L,L^*)_\theta (A_n)\big)$ one has $\lambda(\Omega_{A_n, \theta})(x) = (\Omega_{A_n, \theta}(x_n)).$
\item According to \cite[Prop. 3.6]{cfg}, $\lambda_p = (\lambda, \ell_\infty)_{1/p}$ with associated derivation $p\; \mathscr K$. Therefore, by the reiteration theorem \cite[Section 4.6]{BL}, one has
$$(\lambda_{p_0}, \lambda_{p_1})_\theta = ((\lambda, \ell_\infty)_{1/p_0}, (\lambda, \ell_\infty)_{1/p_1})_\theta = (\lambda, \ell_\infty)_p$$
when $\frac{1}{p} = \frac{1-\theta}{p_0} + \frac{\theta}{p_1}$ with associated derivation $\left (\frac{p}{p_1}-\frac{p}{p_0} \right )\mathscr K$ as it is calculated in \cite[Prop. 3.7]{cfg}. In general, given $(X_0, X_1)$ a compatible couple and one sets $X_{\theta}=(X_0,X_1)_{\theta}$ then the reiteration theorem claims that $(X_{\theta_0}, X_{\theta_1})_{\eta}= X_{\theta}$ holds with equal norms where $\theta=(1-\eta)\theta_{0}+\eta\theta_1.$ Let us denote as usual by $\Omega_{\theta}$ the derivation corresponding to $X_{\theta}=(X_0,X_1)_{\theta}$. The associated derivation to $(X_{\theta_0}, X_{\theta_1})_{\eta}$  is $(\theta_1-\theta_0)\Omega_{\theta}$ (see \cite[Prop.2.3]{cfg} for the case of K\"othe spaces and  \cite{ccfg} for a more general form for this iterated derivation).
\end{itemize}

\section{Singularity properties}
On the opposite side of trivial exact sequences one encounters singular sequences which, as we have already said at the Introduction, are those in which the quotient map is a strictly singular operator. Thus, if one defines singular quasi-linear map \cite{ccs,strict} as one whose restrictions to every infinite dimensional closed subspace are never trivial then one gets that an exact sequence is singular if and only if it is induced by a singular quasi-linear map \smallskip

For every $0\leq p<+\infty$ the Kalton-Peck map $\mathscr K: \ell_p\to\ell_p$ is
singular. The proof for $p>1$ is in \cite{kaltpeck}, the proof for $p=1$ is in \cite{strict} and a proof valid for all $p<+\infty$ can be found in \cite{ccs}.

\adef A quasi-linear map $\Omega: Z\to Y$ will be called \emph{strictly non-singular} if every infinite dimensional subspace $A\subset Z$ contains an infinite dimensional subspace $B\subset A$ so that $\Omega_{|B}$ is trivial.
\zdef
Contrarily to what occurs with strict singularity, strict non-singularity does not have a straightforward translation to the operator language since a quotient map $q: X\to Z$ such that every subspace of $X$ contains a further subspace on which $q$ becomes an isomorphism is itself an isomorphism. An exact sequence
$0\to Y \to X \to Z \to 0$ will be called strictly non-singular if its associated quasi-linear map is strictly non-singular. A quotient map $q: X \to Z$ will be called strictly non-singular if the associated sequence $0\to \ker q \to X \to Z \to 0$ is strictly non-singular. One can easily prove that a quotient map $q:X\to Z$ is strictly non-singular if and only if for every infinite dimensional subspace $A\subset Z$ the exact sequence $0\to Y\to  q^{-1}(A) \to A \to  0$ is not singular.\smallskip

Natural examples of strictly non-singular derivations will be given soon. A sequence $(x_n)$ in $X$ is called weakly-$p$-summable, $1<p<+\infty$, if $(x^*(x_n))_n\in \ell_p$ for every $x^*\in X^*$; equivalently, if $\sup_{\|(\theta_n)\|_{p^*}\leq 1} \|\sum \theta_n x_n\|<+\infty$ (see, e.g.,  \cite{diest}). We need from \cite{wp1,wp2,wp3} the notion of property $\mathcal W_p$:

\adef  A Banach space is said to have property  $\mathcal W_p$ if it is reflexive and every weakly null sequence admits a weakly $p$-summable subsequence.
\zdef

Finally, recall that a Banach space is said to be $\ell_p$-saturated if every infinite dimensional closed subspace contains a subspace isomorphic to $\ell_p$. One has
\begin{lemma}\label{oldie} An exact sequence $0\to Y \to X \to Z \to 0$ in which $Z$ is $\ell_p$-saturated and $X$ has the $\mathcal W_{p^*}$ property is strictly non-singular.
\end{lemma}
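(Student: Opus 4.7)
The plan is to show that inside any infinite dimensional subspace $A\subset Z$ one can find a further infinite dimensional subspace $B\subset A$ on which the quotient map $q:X\to Z$ admits a bounded linear right inverse. Such a section trivializes $\Omega|_{B}$, since it splits the pull-back sequence $0\to Y\to q^{-1}(B)\to B\to 0$, which is exactly the non-singularity condition in the form recalled just after the definition. The raw material will come from combining $\ell_p$-saturation in $Z$ with property $\mathcal{W}_{p^*}$ in $X$ via a standard lift-and-extract argument.

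First I would exploit that $X$ is reflexive, as this is built into $\mathcal{W}_{p^*}$, and consequently so is $Z=X/Y$. Pick any infinite dimensional $A\subset Z$ and, using $\ell_p$-saturation, choose a copy $A_0\subset A$ isomorphic to $\ell_p$ with normalized basic sequence $(z_n)$; since $1<p<+\infty$, $(z_n)$ is weakly null in $Z$. Lift each $z_n$ to some $x_n\in X$ with $\|x_n\|\leq 2\|z_n\|$. By reflexivity of $X$, a subsequence of $(x_n)$ converges weakly to some $x\in X$, and the weak continuity of $q$ gives $q(x)=\lim_w z_n=0$, so $x\in Y$. Replacing $x_n$ by $x_n-x$ produces a bounded \emph{weakly null} sequence in $X$ that still lifts $(z_n)$.

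Now I would apply property $\mathcal{W}_{p^*}$ to extract a further weakly $p^*$-summable subsequence $(x_{n_k})$. By the dual characterization recalled in the excerpt, the assignment $e_k\mapsto x_{n_k}$ extends to a bounded operator $T:\ell_p\to X$. Then $qT:\ell_p\to Z$ sends $e_k$ to $z_{n_k}$, and since $(z_{n_k})$, being a subsequence of an $\ell_p$-basis, is equivalent to the unit vector basis of $\ell_p$, $qT$ is an isomorphism onto $B:=\overline{\lin}(z_{n_k}:k\in\N)\subset A$. Therefore $S:=T\circ(qT)^{-1}:B\to X$ is a bounded linear section of $q$ over $B$, the restricted sequence $0\to Y\to q^{-1}(B)\to B\to 0$ splits, and $\Omega|_{B}$ is trivial, as required.

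The main obstacle I anticipate is in the second paragraph: one needs the lifts $x_n$ to be weakly null so that $\mathcal{W}_{p^*}$ can be invoked. The fix is precisely the argument above, where reflexivity of $X$ combined with the weak-to-weak continuity of $q$ lets us subtract the weak limit, which lies in $Y$, without damaging the lifting property. Everything else is bookkeeping: $\ell_p$-saturation delivers the initial $(z_n)$, the dual formulation of weak $p^*$-summability delivers the operator $T$, and basic sequence stability guarantees that $qT$ is an isomorphism onto its image.
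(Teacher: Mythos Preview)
Your argument is correct and follows essentially the same route as the paper: lift an $\ell_p$-basic sequence from $Z$, use reflexivity of $X$ to subtract a weak cluster point (which lands in $Y$) so the lifts become weakly null, then invoke $\mathcal{W}_{p^*}$ to extract a weakly $p^*$-summable subsequence, which via the dual characterization yields a bounded linear section over the span of the corresponding subsequence in $Z$. The only difference is that you spell out the section as $T\circ(qT)^{-1}$ and justify boundedness of $T$ explicitly, whereas the paper leaves this implicit.
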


\begin{proof} Let $H$ be an infinite dimensional subspace of $Z$ and let $(h_n)$ a sequence in $H$ equivalent to the canonical basis of $\ell_p$ inside $H$. Since $q$ is open, there exists a constant $C>0$ such that for every $h_n$ we can choose $x_n\in X$ with $\|x_n\|\leq C$ and such that $qx_n=h_n$. Since $X$ is reflexive we can suppose that $(x_n)$ is weakly convergent to, say, $x$. Thus $qx=0$. By the $\mathcal W_{p^*}$ property of $X$, there exists a weakly $p^*$-summable subsequence $(x_k-x)$. Thus, the linear application $h_k\to x_k -x $ is a continuous selection for $q_{|[x_k]}$.
\end{proof}
Examples of spaces with property $\mathcal W_p$ are provided by \cite[Theorem 1]{cscam}: if $\lambda$ is a Banach space with unconditional basis with property $\mathcal W_p$ and $X$ is a Banach space with property $\mathcal W_p$ then also $\lambda(X)$ has property $\mathcal W_p$. According to \cite[Remark 3]{cscam} the result is false for
an arbitrary $\ell_p$-sum of a sequence of spaces with property $\mathcal W_{p^*}$, although it still works for sequences of finite dimensional spaces. The argument we will need is essentially contained in the proof of \cite[Theorem 1]{cscam} (see also \cite{OS} for a more general result).

\begin{cor}\label{2sum} Given a sequence $(F_n)$ of finite-dimensional spaces, $\ell_p(F_n)$ has the $\mathcal W_{p^*}$ property.
\end{cor}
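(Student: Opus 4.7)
The proof splits into the two halves of the definition of $\mathcal W_{p^*}$. Reflexivity of $\ell_p(F_n)$ is immediate since each $F_n$ is finite-dimensional (hence reflexive) and $1<p<\infty$. For the extraction of a weakly $p^*$-summable subsequence my plan is the classical gliding hump / block basis argument, whose execution here rests crucially on finite-dimensionality of the coordinate spaces.

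Start with a normalized weakly null sequence $(x^m)\subset\ell_p(F_n)$ and let $P_N$ denote the projection onto the first $N$ coordinates. For fixed $N$ the space $P_N(\ell_p(F_n))=\bigoplus_{n\le N}F_n$ is finite-dimensional, so $P_N(x^m)\to 0$ weakly forces $\|P_N(x^m)\|\to 0$. After passing to a subsequence I may assume $\|x^{m_k}\|\to c\ge 0$. If $c=0$ I thin further so that $\sum_k\|x^{m_k}\|^{p^*}<\infty$, whence $(x^{m_k})$ is trivially weakly $p^*$-summable (one has $|x^*(x^{m_k})|\le\|x^*\|\,\|x^{m_k}\|$ for every $x^*$). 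If $c>0$ I run a gliding hump, choosing $m_1<m_2<\cdots$ and $0=N_0<N_1<\cdots$ so that the vectors $y_k=(P_{N_k}-P_{N_{k-1}})x^{m_k}$ satisfy $\|x^{m_k}-y_k\|<2^{-k}$; this is possible precisely because the weak-to-norm passage above lets me force $\|P_{N_k}(x^{m_{k+1}})\|$ arbitrarily small at each stage.

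The blocks $(y_k)$ are disjointly supported in $\ell_p(F_n)$, so for any scalars
$$\Bigl\|\sum_k\theta_ky_k\Bigr\|_{\ell_p(F_n)}=\Bigl(\sum_k|\theta_k|^p\|y_k\|^p\Bigr)^{1/p}.$$
Since $\|y_k\|\to c>0$, the sequence $(y_k)$ is equivalent to the unit vector basis of $\ell_p$, which is the paradigmatic weakly $p^*$-summable sequence. As the correction $(x^{m_k}-y_k)$ is absolutely summable in norm, weak $p^*$-summability is inherited by $(x^{m_k})$.

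The one point requiring care, and the reason the corollary does not follow from the general $\mathcal W_p$-inheritance result of \cite[Theorem 1]{cscam}, is the weak-to-norm passage $\|P_N(x^m)\|\to 0$, which needs each $F_n$ to be finite-dimensional. This is exactly the phenomenon whose failure in infinite dimensions is noted in \cite[Remark 3]{cscam}; once this step is in hand, the remainder of the argument is the familiar scalar gliding hump with no vector-valued complications.
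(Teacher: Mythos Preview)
Your argument is correct and supplies the details the paper omits: the paper states the corollary without proof, merely remarking that ``the argument we will need is essentially contained in the proof of \cite[Theorem 1]{cscam}'' (with a further reference to \cite{OS}). Your gliding-hump proof is precisely the expected argument.

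One small slip: you begin with a \emph{normalized} weakly null sequence and then pass to a subsequence with $\|x^{m_k}\|\to c$ and treat the case $c=0$, which cannot occur if the sequence is normalized. Simply drop the word ``normalized'': the definition of $\mathcal W_{p^*}$ asks you to handle an arbitrary weakly null sequence, which is bounded but need not have unit norms. In fact the case split is unnecessary: once the blocks $(y_k)$ are disjointly supported and bounded in $\ell_p(F_n)$, the identity $\|\sum_k\theta_k y_k\|=(\sum_k|\theta_k|^p\|y_k\|^p)^{1/p}\le(\sup_k\|y_k\|)\,\|(\theta_k)\|_p$ already gives weak $p^*$-summability of $(y_k)$ regardless of whether $\|y_k\|$ is bounded away from zero, so the gliding hump alone suffices.
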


The paper \cite{cfg} studied the nontriviality and singularity of the sequences $\Omega_\theta$ in terms of the initial couple $(X_0, X_1)$. More precisely, let  $X$ be a Banach space with a $1$-unconditional basis. Following \cite{cfg}, we consider the parameter
$$
A_{X}(n)=\sup \{\|x_1+\ldots+x_n\|: \|x_i\|\leq 1,\; n<x_1< \ldots < x_n\}.
$$
Given two real functions $f,g$ we will write $f\sim g$ to mean that $0<\lim \inf f(t)/g(t) \leq \lim \sup f(t)/g(t)<+\infty$. One then has

\begin{theorem} \cite[Proposition 5.7]{cfg} \label{logb}
Let $(X_0, X_1)$ be an interpolation couple of Banach spaces with a common $1$-unconditional
basis, and let $0<\theta<1$. If
\begin{enumerate}
\item $A_{X_0} \not\sim A_{X_1}$,
\item $A_{X_0}^{1-\theta} A_{X_1}^\theta \sim A_{X_{\theta}}$,
\item $A_{X_{\theta}} \sim A_Y$ for all infinite dimensional subspaces $Y\subset X_\theta$,
\end{enumerate} then $\Omega_\theta$ is singular.
\end{theorem}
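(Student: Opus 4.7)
Argue by contradiction, assuming $\Omega_\theta|_Y$ is trivial for some infinite-dimensional $Y\subset X_\theta$, i.e.\ there is a linear $L\colon Y\to\Sigma$ with $\|L(y)-\Omega_\theta(y)\|_{X_\theta}\leq K\|y\|_{X_\theta}$ for all $y\in Y$. Since $X_\theta$ has a $1$-unconditional basis, and (since by (3) all subspaces have the same asymptotic parameter as $X_\theta$, forcing reflexivity) a gliding-hump argument on $Y$ yields a normalized block basic sequence $(y_n)$ of the common basis, with pairwise disjoint supports $S_n$ such that $n<S_n$. A further extraction (a Bessaga--Pelczynski-type passage to a subsequence, legitimate by reflexivity) ensures the residuals $r_n:=L(y_n)-\Omega_\theta(y_n)$ are essentially disjointly supported, with $\|r_n\|_{X_\theta}\leq K$. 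Set $z_N:=y_1+\cdots+y_N$; hypothesis (3) gives $\|z_N\|_{X_\theta}\sim A_Y(N)\sim A_{X_\theta}(N)$.

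The key computation is the identity
\begin{equation*}
\Omega_\theta(z_N)=\sum_{n=1}^N\Omega_\theta(y_n)+\lambda_N\,z_N,\qquad |\lambda_N|\sim\left|\log\frac{A_{X_1}(N)}{A_{X_0}(N)}\right|.
\end{equation*}
To obtain it, apply formula (\ref{kpgeneralized}): choose near-optimal $u_n,v_n$ with $|y_n|=|u_n|^{1-\theta}|v_n|^{\theta}$ and balanced norms $\|u_n\|_{X_0}=\|v_n\|_{X_1}\sim 1$. By disjointness, $u=\sum u_n$ and $v=\sum v_n$ give a decomposition $|z_N|=|u|^{1-\theta}|v|^{\theta}$ with $\|u\|_{X_0}\leq A_{X_0}(N)$ and $\|v\|_{X_1}\leq A_{X_1}(N)$; hypothesis (2) combined with $\|z_N\|_{X_\theta}\sim A_{X_\theta}(N)\sim A_{X_0}(N)^{1-\theta}A_{X_1}(N)^{\theta}$ shows $(u,v)$ is already near-optimal up to balancing. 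The rebalancing $\bar u=\mu u$, $\bar v=\mu^{-\theta/(1-\theta)}v$ chosen so that $\|\bar u\|_{X_0}=\|\bar v\|_{X_1}$ introduces exactly the additive contribution $z_N\log\mu$ in $\Omega_\theta(z_N)$, and forces $|\log\mu|\sim|\log(A_{X_1}(N)/A_{X_0}(N))|$.

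Hypothesis (1) then supplies a subsequence along which $|\lambda_N|\to\infty$. Combining the triviality bound applied to $z_N$ with the linearity identity $L(z_N)=\sum L(y_n)=\sum\Omega_\theta(y_n)+\sum r_n$ and the key identity yields
\begin{equation*}
\lambda_N\,z_N=\sum_{n=1}^N r_n\;-\;\bigl(L(z_N)-\Omega_\theta(z_N)\bigr).
\end{equation*}
Since the $r_n$ have essentially disjoint supports of norm $\leq K$, unconditionality of the basis gives $\|\sum r_n\|_{X_\theta}\lesssim K\,A_{X_\theta}(N)\sim K\|z_N\|_{X_\theta}$. Hence $|\lambda_N|\,\|z_N\|_{X_\theta}\lesssim K\|z_N\|_{X_\theta}$, contradicting $|\lambda_N|\to\infty$.

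The principal obstacle is establishing the key identity $\Omega_\theta(z_N)=\sum\Omega_\theta(y_n)+\lambda_N z_N$ with $\lambda_N$ of the asserted size; this depends crucially on (2) to rule out pathological concentration in the Lozanovskii decomposition of $z_N$, so that the naive disjoint-support decomposition is truly near-optimal and the only remaining freedom is the global rescaling that produces $\lambda_N$. A secondary technical point is the additional extraction ensuring the error vectors $r_n$ end up disjointly supported, which is what lets unconditionality control $\|\sum r_n\|_{X_\theta}$ by $A_{X_\theta}(N)$.
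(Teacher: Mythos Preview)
The present paper does not prove this theorem: it is quoted verbatim from \cite[Proposition~5.7]{cfg} and used as a black box, so there is no in-paper argument to compare against. That said, your outline is exactly the strategy of the original \cite{cfg} proof: reduce to a normalized disjoint block sequence $(y_n)$ on which the restriction stays trivial, exploit the Lozanovskii description (\ref{kpgeneralized}) together with hypotheses (2) and (3) to see that the naive sum of individual decompositions is near-optimal for $z_N=\sum_{n\le N}y_n$, and then read off the extra additive term $\lambda_N z_N$ coming from rebalancing, whose size is governed by $\log\bigl(A_{X_1}(N)/A_{X_0}(N)\bigr)$ and blows up by (1).

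Two points to tighten. First, your rebalancing exponent is off: if $\bar u=\mu u$ then the constraint $|\bar u|^{1-\theta}|\bar v|^{\theta}=|u|^{1-\theta}|v|^{\theta}$ forces $\bar v=\mu^{-(1-\theta)/\theta}v$, not $\mu^{-\theta/(1-\theta)}v$ (your formula only works at $\theta=1/2$); this does not affect the conclusion but should be fixed. Second, the passage from an arbitrary subspace $Y$ on which $\Omega_\theta$ is trivial to a block subspace on which it remains trivial is not completely free, since $\Omega_\theta$ is nonlinear; one uses that $\Omega_\theta$ is a centralizer (so projections onto coordinate intervals commute with it up to a bounded error) to transfer triviality along a small perturbation. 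Likewise, the extraction making the residuals $r_n$ essentially disjointly supported with supports drifting to the right is a standard gliding-hump on the bounded sequence $(r_n)\subset X_\theta$, and with that arrangement the estimate $\bigl\|\sum_{n\le N}r_n\bigr\|_{X_\theta}\lesssim K\,A_{X_\theta}(N)$ is immediate from the definition of $A_{X_\theta}$. With these details supplied your argument is complete and matches \cite{cfg}.
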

The paramount example is provided by the scale $(\ell_1, \ell_\infty)$, which yields at $\theta=1/p$ the interpolation space $\ell_{p}$ and induces the Kalton-Peck sequence $\mathscr K$ and derived space $Z_p$. Since $A_{\ell_p}(n)=n^{1/p}$ and conditions (1), (2), (3) are verified, $\mathscr K$ is singular for all $1<p<\infty$. We study now to what extent the conditions are necessary to get $\Omega_\theta$ nontrivial or singular.

\adef
Let $(X_0, X_1)$ be an interpolation couple of Banach spaces with a common $1$-unconditional
basis, and let $0<\theta<1$. We will say that the spaces $(X_0, X_1)$ are $A$-different if $A_{X_0} \not\sim A_{X_1}$; we will say that they
$A$-interpolate at $\theta$ if $A_{X_0}^{1-\theta} A_{X_1}^\theta \sim A_{X_{\theta}}$; and we will say that they are homogeneous at $\theta$ if $A_{X_{\theta}} \sim A_Y$ for all infinite dimensional subspaces $Y\subset X_\theta$.\zdef

With some abuse of notation we will say that the scale has those properties when it is not necessary to specify $X_0, X_1$ and $\theta$. For instance, whenever $A_{X_0}\sim A_{X_1}$ then the scale $A$-interpolates

\subsection{The scale of weighted $\ell_p$ spaces} Let $\omega=(\omega_i)_{i=1}^\infty$ be a weight; i.e., a sequence of strictly positive real numbers. Consider the weighted $\ell_p$ spaces defined by
$$
\ell_p(\omega)=\left\lbrace(y_i)_{i=1}^\infty\in\mathbb C^{\N}:\sum |y_i|^p\omega_i<\infty\right\rbrace
$$
with norm $
\|y\|=\left(\sum |y_i|^p\omega_i\right)^{1/p} $. Let $\omega_0, \omega_1$ be weights and pick the scale $\left(\ell_p(\omega_0), \ell_p(\omega_1)\right)$. As it is well known (see \cite{BL} section 5.4) $\left(\ell_p(\omega_0), \ell_p(\omega_1) \right)_\theta = \ell_p(\omega_0^{1-\theta}\omega_1^\theta)$. Since the map $Bx(z)=x\left(\dfrac{\omega_1}{\omega_0}\right)^{(\theta-z)/p}$ is a homogeneous bounded selection for the evaluation map $\delta_\theta:\mathcal F\rightarrow\ell_p(\omega_0^{1-\theta}\omega_1^\theta)$ the associated derivation is the linear map $x\to -\frac{1}{p}x\log \frac{\omega_1}{\omega_0}$, hence trivial. In particular,
$\left(\ell_2(\omega^{-1}), \ell_2(\omega) \right)_{1/2}=\ell_2$ with trivial derivation. On the other hand
\begin{lemma} $A_{\ell_p(\omega)}(n) = n^{1/p}$.
\end{lemma}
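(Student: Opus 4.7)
The plan is a simple sandwich argument exploiting the fact that $\ell_p(\omega)$ is a weighted $\ell_p$-space, so the norm of a block-disjointly supported sum decouples as an $\ell_p$-sum of the individual norms.

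For the upper bound, let $x_1 < x_2 < \cdots < x_n$ with $n < x_1$ and $\|x_i\|_{\ell_p(\omega)} \leq 1$. Since the supports are pairwise disjoint, one has
$$\left\| \sum_{i=1}^n x_i \right\|_{\ell_p(\omega)}^p = \sum_{i=1}^n \sum_{k \in \supp(x_i)} |x_i(k)|^p \omega_k = \sum_{i=1}^n \|x_i\|_{\ell_p(\omega)}^p \leq n,$$
which yields $A_{\ell_p(\omega)}(n) \leq n^{1/p}$.

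For the lower bound, one needs to exhibit $n$ normalized, successive, supported-beyond-$n$ vectors whose sum has norm at least (or equal to) $n^{1/p}$. The simplest choice is to take a single scaled basis vector for each $x_i$: set $x_i = \omega_{n+i}^{-1/p} e_{n+i}$ for $i = 1, \ldots, n$, where $(e_k)$ is the canonical basis. Then $\|x_i\|_{\ell_p(\omega)} = 1$, the condition $n < x_1 < \cdots < x_n$ is met, and by disjoint supports
$$\left\| \sum_{i=1}^n x_i \right\|_{\ell_p(\omega)}^p = \sum_{i=1}^n 1 = n,$$
so $A_{\ell_p(\omega)}(n) \geq n^{1/p}$.

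No real obstacle here: the weight $\omega$ disappears because each element can be individually normalized, and the $p$-additivity of disjointly supported norms does the rest. Combining the two bounds gives $A_{\ell_p(\omega)}(n) = n^{1/p}$.
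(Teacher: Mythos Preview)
Your proof is correct and follows essentially the same idea as the paper's: both use the $p$-additivity of disjointly supported norms for the upper bound and the scaled basis vectors $\omega_k^{-1/p}e_k$ for the lower bound. The paper packages this as an isometry $x\mapsto \omega^{1/p}x$ from $\ell_p(\omega)$ onto $\ell_p$ and then quotes $A_{\ell_p}(n)=n^{1/p}$, whereas you compute directly; your version is in fact slightly more careful about the support condition $n<x_1<\cdots<x_n$ by choosing $x_i=\omega_{n+i}^{-1/p}e_{n+i}$ rather than $\omega_j^{-1/p}e_j$ for $j=1,\dots,n$.
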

\begin{proof} We know that $A_{\ell_p}(n)=n^1/p$.
If $x\in\ell_p(\omega)$ then $x\omega^{1/p}\in\ell_p$ and $\|x\omega^{1/p}\|_p=\|x\|_{\ell_p(\omega)}$ so the parameter $A_{\ell_p(\omega)}(n) \leq n^{1/p}$.
Now given $x\in\ell_p$, then $\omega^{-1/p}x\in\ell_p(\omega)$ and $\left\|\omega^{-1/p}x \right\|_{\ell_p(\omega)}=\|x\|_p$, so choosing the elements $\left\lbrace \omega^{-1/p}e_j\right\rbrace_{j=1}^n$ we obtain that the parameter $A_{\ell_p(\omega)}(n) = n^{1/p}$.
\end{proof}

Thus, the scale of weighted $\ell_p(\omega)$ spaces are not $A$-different, although it $A$-interpolates and is $A$-homogeneous. In fact, one would be easily tempted to believe that scales with equal $A$ --who, therefore $A$-interpolate--- should induce trivial derivations. However, it is not so:

\subsection{The scale of Lorentz sequence spaces} Consider the scales of Lorentz $\ell_{p,q}$ spaces, whose norm comes defined by
$\|x\|_{p,q}=\frac{p}{q}\left(\sum_{n=1}^\infty x^*(n)^q\left(n^{q/p}-(n-1)^{q/p}\right)\right)^{1/q}$
if $q<\infty$, and $\|x\|_{p,q}=\sup n^{1/p}x^*(n)$ for $q=\infty$. Recall from \cite{cabe2} that $(\ell_{p_0, q_0}, \ell_{p_1, q_1})_\theta = \ell_{p,q}$
with derivation
$$
\Omega(x)=  q\left(\dfrac{1}{q_1}-\frac{1}{q_0}\right)\mathscr K(x)    +     \left(\frac{q}{p}\left(\dfrac{1}{q_0}-\frac{1}{q_1}\right)-\left(\dfrac{1}{p_0}-\frac{1}{p_1}\right)\right)\kappa(x)
$$
Here $\kappa$ denotes the Kalton map \cite{cabe2}. It is proved in \cite[Proposition 2]{cabe2} that $\kappa$ is strictly non-singular. On the other hand, it is not hard to check that $A_{\ell_{p,q}}(n)=n^{1/\min\lbrace p, q\rbrace}.$ One thus has that $q_0, q_1 \geq p$ one has $A_{\ell_{p,q_0}} \sim A_{\ell_{p,q_1}}$, the spaces $(\ell_{p,q_0}, \ell_{p,q_1})$  $A$-interpolate and are $A$-homogeneous at every $\theta$. Moreover

\begin{lemma} For $q_0, q_1 \geq p$ the induced derivations are strictly singular.
\end{lemma}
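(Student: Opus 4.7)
Since $p_0=p_1=p$ in this scale, the formula for $\Omega$ from the excerpt simplifies because the coefficient $\frac{1}{p_0}-\frac{1}{p_1}$ vanishes: one obtains
$$\Omega(x) = c\left(\mathscr K(x)-\frac{1}{p}\kappa(x)\right), \qquad c = q\left(\frac{1}{q_1}-\frac{1}{q_0}\right).$$
The degenerate case $q_0=q_1$ gives $\Omega=0$ and must be excluded from the statement; assume $c\neq 0$. Given any infinite-dimensional closed subspace $E\subset \ell_{p,q}$, the task is to produce an infinite-dimensional $F\subset E$ on which $\Omega|_F$ is non-trivial. First invoke that $\kappa$ is strictly non-singular (\cite[Prop.~2]{cabe2}): there exists an infinite-dimensional $E'\subset E$ and a linear $L$ and bounded homogeneous $B$ with $\kappa|_{E'}=L+B$. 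Then $\Omega|_{E'}=c\,\mathscr K|_{E'}-\frac{c}{p}L-\frac{c}{p}B$, so it is enough to prove $\mathscr K|_{E'}$ non-trivial, i.e.\ that $\mathscr K$ is strictly singular on $\ell_{p,q}$.

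\textbf{Reduction to the classical Kalton-Peck map on $\ell_p$.} Since $(e_n)$ is a shrinking unconditional basis of $\ell_{p,q}$ for $1<p<\infty$ and $1\leq q<\infty$, a Bessaga--Pe\l czy\'nski gliding-hump argument yields, up to an arbitrarily small perturbation absorbed inside $E'$, a normalized block basis of $(e_n)$ contained in $E'$. Using the lower $p$-estimate for disjoint vectors (valid because $q\geq p$) and the unconditional structure, I may pass to a further subsequence and replace it by a \emph{flat} block basis
$$u_n = N^{-1/p}\sum_{j\in A_n} e_j,$$
with pairwise disjoint supports $A_n$ of common size $N$. Let $F$ be the closed linear span of $(u_n)$. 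A direct computation with $\mathscr K(x)=x\log(\|x\|/|x|)$ on $x=\sum_n \alpha_n u_n$ gives, on each $A_n$, the value $\alpha_n N^{-1/p}[\log(\|x\|_{p,q}/|\alpha_n|)+\frac{1}{p}\log N]$; summing over $n$,
$$\mathscr K\Big(\sum_n \alpha_n u_n\Big) = \sum_n \alpha_n\log\frac{\|\sum_n \alpha_n u_n\|_{p,q}}{|\alpha_n|}\,u_n + \frac{\log N}{p}\sum_n \alpha_n u_n,$$
the second summand being a bounded linear multiple of the identity on $F$, hence a trivial perturbation. On flat blocks of common support size $N$ the Lorentz norm of $\sum_n\alpha_n u_n$ is equivalent to $\big(\sum_n|\alpha_n|^p\big)^{1/p}$ with constants depending only on $p,q,N$; substituting this equivalence in the logarithm produces only an additive constant inside the $\log$, which contributes yet another bounded linear perturbation. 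Therefore $\mathscr K|_F$ coincides, modulo triviality, with a positive multiple of the classical Kalton-Peck map on $\ell_p$ transported through the isomorphism $F\cong \ell_p$. The classical Kalton-Peck map is strictly singular on $\ell_p$ by \cite{kaltpeck,strict,ccs}, so its restriction to every infinite-dimensional sub-subspace of $F$ is non-trivial, and thus so is $\mathscr K|_F$; consequently $\Omega|_F$ is non-trivial. Since $E$ was arbitrary, $\Omega$ is strictly singular.

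\textbf{Main obstacle.} The hard part is the quantitative content of the reduction: both the extraction of a flat block basis inside the arbitrary subspace $E'$ and the norm equivalence $\|\sum_n \alpha_n u_n\|_{p,q}\sim (\sum_n|\alpha_n|^p)^{1/p}$ for flat blocks of common support size rest on the condition $q\geq p$ (via the lower $p$-estimate for disjoint vectors in $\ell_{p,q}$), without which one cannot preserve $\ell_p$-equivalence when passing to a flat block basis of uniform support size. Making these estimates explicit, and checking that each ``linear + bounded'' piece accumulated along the way is indeed a trivial quasi-linear perturbation (so that non-triviality is genuinely inherited from the $\ell_p$-Kalton-Peck map), is the delicate computational heart of the proof.
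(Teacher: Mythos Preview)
Your overall architecture is exactly the paper's: write $\Omega$ as a linear combination of $\mathscr K$ and $\kappa$, use the strict non-singularity of $\kappa$ (from \cite[Prop.~2]{cabe2}) to pass to an infinite-dimensional $E'\subset E$ on which $\kappa$ is trivial, and then invoke the strict singularity of $\mathscr K$. The paper simply \emph{quotes} the last fact (together with the strict non-singularity of $\kappa$), referring to \cite[Example~1 and Prop.~2]{cabe2}, and stops there; it does not attempt an in-house proof of the singularity of $\mathscr K$ on $\ell_{p,q}$. Incidentally, the paper phrases its argument for general $(\ell_{p_0,q_0},\ell_{p_1,q_1})$ by solving the linear system in the two coefficients, rather than immediately specializing to $p_0=p_1=p$ as you do; your specialization is consistent with the surrounding context.

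Where your write-up diverges is in the extra ``reduction to $\ell_p$'' paragraph, and that part has a genuine gap. The step ``I may pass to a further subsequence and replace it by a flat block basis $u_n=N^{-1/p}\sum_{j\in A_n}e_j$'' does not keep you inside $E'$: flat blocks of the canonical basis are, in general, not elements of the subspace you started from, so proving $\mathscr K|_{[u_n]}$ non-trivial says nothing about $\mathscr K|_{E'}$. Nor can this be repaired by a small perturbation argument, since an arbitrary normalized block in $\ell_{p,q}$ need not be close to a flat block. Moreover, the claimed equivalence $\|\sum_n\alpha_n v_n\|_{p,q}\sim(\sum|\alpha_n|^p)^{1/p}$ fails for general disjoint blocks in $\ell_{p,q}$ when $q\neq p$ (for suitably chosen support sizes one obtains block sequences equivalent to the $\ell_q$ basis, not the $\ell_p$ basis), so the condition $q\geq p$ alone does not buy you the $\ell_p$-reduction you want. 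The strict singularity of $\mathscr K$ on $\ell_{p,q}$ is a genuine result that requires its own argument; the paper circumvents this by citation rather than computation. If you want a self-contained proof, you should work directly with an arbitrary normalized block basis $(v_n)\subset E'$, compute $\mathscr K(\sum\alpha_n v_n)=\sum\alpha_n\log(\|x\|/|\alpha_n|)v_n+\sum\alpha_n\mathscr K(v_n)$, and then argue non-triviality of the first summand \emph{on the actual span $[v_n]$}---but this last step is precisely where the work of \cite{cabe2,ccs} lies.
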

\begin{proof} Indeed, by solving the equation system that appears in the derivation
$$\left\{
  \begin{array}{ll}
    \left(\frac{q}{p}\left(\frac{1}{q_0}-\frac{1}{q_1}\right)-\left(\frac{1}{p_0}-\frac{1}{p_1}\right)\right)=0 & \hbox{;} \\
    p^{-1} = (1-\theta)p_0^{-1} +  \theta p_1^{-1} & \hbox{;} \\
    q^{-1}= (1-\theta)q_0^{-1} + \theta q_1^{-1} & \hbox{}
  \end{array}
\right .$$
one gets that the associated derivation of the scales in which $q_0/p_0 = q_1/p_1$ is the Kalton-Peck map (up to a constant factor); those in which $q_0=q_1$ have the Kalton map as derivation (up to a constant factor); and the associated derivation to every other interpolation scale is a linear combination of both. Since the Kalton-Peck map is strictly singular and the Kalton map is strictly non-singular, all those combinations have strictly singular derivations (see also \cite[Example 1 and Proposition 2]{cabe2}).\end{proof}


It is even possible to obtain $\ell_2$ as interpolated space: the couple $(\ell_{p,p^*}, \ell_{p^*,p})$
has equal $A$ since $A_{\ell_{p, p^*}} = A_{\ell_{p^*, p}}$, does not $A$-interpolate yet it still provides a singular derivation.

\subsection{Fragmented scales} We study now scales obtained by finite-dimensional fragmentation of other scales.

\adef Let $(A_n)$ be a partition of $\N$, $\lambda$ a Banach space with an $1$-unconditional basis and $L$ a Banach space with a shrinking unconditional basis. We shall refer to the interpolation scale obtained from the couple $(\lambda(L(A_n)), \lambda(L^*(A_n)))$ as the $\lambda$-fragmented scale of $L$ according to the partition $(A_n)$ of $\N$.\zdef

Let us consider first the particularly interesting case of the fragmentation of the Kalton-Peck sequence $\mathscr K$.
We already know that by $\ell_2$-fragmentation of $\mathscr K$ we obtain a new derivation $\ell_2({\mathscr K}_{|\ell_2(A_n)})$. The derivation $\mathscr K$ is symmetric, which means, roughly speaking, that its restriction to any finite dimensional $\ell_2(A)$ only depends on the size of the set $A$. And this implies that when $\sup |A_n|< +\infty$ the restrictions $\mathscr {K_2}_{|\ell_2(A_n)}$ are ``uniformly trivial" and therefore $\ell_2({\mathscr K}_{|\ell_2(A_n)})$ is trivial. When $\sup |A_n|=+\infty$ one however has

\begin{prop}\label{fraguel} If $\sup |A_n|=+\infty$ then $\ell_2({\mathscr K}_{|\ell_2(A_n)})$ is not trivial and it is strictly non-singular.
\end{prop}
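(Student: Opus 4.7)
The plan is to treat the two conclusions separately. For \emph{non-triviality} I will exploit a uniform obstruction: the Kalton-Peck map restricted to $\ell_2(A)$ has nontriviality constant of order $\log|A|$, and any trivialization of the fragmented derivation would restrict to give uniform trivializations on each block $A_n$. For \emph{strict non-singularity} I will appeal to Lemma \ref{oldie} and Corollary \ref{2sum} with $p=p^*=2$, after identifying the middle space of the induced exact sequence as an $\ell_2$-sum of finite dimensional spaces.

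Concretely, for non-triviality assume, towards a contradiction, that $\ell_2(\mathscr K_{|\ell_2(A_n)})$ lies at bounded distance $C$ from a linear map $L:\ell_2\to\ell_2$, where we identify $\ell_2(\ell_2(A_n))\cong\ell_2$ via the partition $(A_n)$. Composing with the projection onto $\ell_2(A_n)$ and restricting the domain to vectors supported in $A_n$ produces, for each $n$, a linear $L_n:\ell_2(A_n)\to\ell_2(A_n)$ with $\|\mathscr K(y)-L_n y\|\leq C\|y\|$ for every $y\in\ell_2(A_n)$; here Lemma \ref{frag} is used to recognise that $\mathscr K_{|\ell_2(A_n)}$ is just the restriction of $\mathscr K$ to the block. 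A Rademacher argument then forces $C\geq\tfrac14\log|A_n|$: testing on $e_i$ gives $\|L_n e_i\|\leq C$; testing on $y_\varepsilon=\sum_{i\in A_n}\varepsilon_i e_i$, for which $\mathscr K(y_\varepsilon)=\tfrac12(\log|A_n|)\,y_\varepsilon$, and averaging the squared $\ell_2$-norm over $\varepsilon$ yields $\sum_{i\in A_n}\bigl((L_n e_i)_i-\tfrac12\log|A_n|\bigr)^2\leq C^2|A_n|$; comparing this with $\sum_i (L_n e_i)_i^2\leq\sum_i\|L_n e_i\|^2\leq C^2|A_n|$ via the triangle inequality in $\ell_2$ over the diagonal produces the claimed lower bound. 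Since $\sup|A_n|=\infty$, this contradicts the uniformity of $C$.

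For strict non-singularity, notice that $\ell_2(\mathscr K_{|\ell_2(A_n)})$ acts block by block, so the middle space of the induced exact sequence $0\to\ell_2\to d\ell_2\to\ell_2\to 0$ is isomorphic to the $\ell_2$-sum $\ell_2\bigl(Z_2(A_n)\bigr)$, where $Z_2(A_n)=\ell_2(A_n)\oplus_{\mathscr K_{|\ell_2(A_n)}}\ell_2(A_n)$ is finite dimensional because $A_n$ is finite. Corollary \ref{2sum} then grants the middle space property $\mathcal W_2$, and since the quotient $\ell_2$ is trivially $\ell_2$-saturated, Lemma \ref{oldie} (with $p=p^*=2$) applies and gives strict non-singularity.

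The main obstacle is the quantitative lower bound on the nontriviality constant of $\mathscr K$ on $\ell_2(A_n)$; once that Rademacher computation is in hand, the rest of the proof is a direct assembly of tools already developed in the paper (Lemma \ref{frag} for the block structure, Corollary \ref{2sum} for property $\mathcal W_2$, and Lemma \ref{oldie} for the strict non-singularity criterion).
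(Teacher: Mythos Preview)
Your proof is correct and follows essentially the same route as the paper: for strict non-singularity you invoke exactly the same pair of results (Lemma~\ref{oldie} with $p=p^*=2$ and Corollary~\ref{2sum}), while for non-triviality the paper simply cites \cite[Theorem~6.3]{kaltpeck}, whose content is precisely the uniform lower bound of order $\log|A_n|$ on the trivialization constant of $\mathscr K$ on $\ell_2(A_n)$ that you establish directly via the Rademacher-averaging computation. One small remark: your opening assumption that the trivializing linear map $L$ takes values in $\ell_2$ (rather than in the ambient $\Sigma$) is harmless here, since for $y$ supported in a single finite block one has $\mathscr K(y)\in\ell_2$ and $(\mathscr K-L)(y)\in\ell_2$, forcing $L(y)\in\ell_2$ and allowing the norm-$1$ block projection you use.
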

\begin{proof} The nontriviality  can be deduced from \cite[Theorem 6.3]{kaltpeck} and its strictly non-singular character follows from
Lemma \ref{oldie} and Corollary \ref{2sum}.\end{proof}

\subsection{Scales of (fragmented) weak Hilbert spaces}
We refer the reader to \cite{pisier} for the definition and properties of weak-Hilbert spaces. What we need here is that the Tsirelson $2$-convexified $\mathcal T_2$ space is a weak Hilbert space with unconditional basis. One has:

\begin{prop}\label{weakH}$\;$
 \begin{enumerate}
 \item The couple $(\mathcal T_2, \mathcal T_2^*)$ yields $(\mathcal T_2, \mathcal T_2^*)_{1/2}=\ell_2$ with nontrivial derivation. The scale fails (1) and verifies (2, 3) from Theorem \ref{logb}.
\item Pick the partition of $\N$ given by the sets $A_n = \{2^{n-1}, \dots, 2^{n}-1\}$. The fragmented scale verifies $(\ell_2(\mathcal T_2(A_n)), \ell_2(\mathcal T_2(A_n)^*))_{1/2}=\ell_2$ with trivial derivation.\end{enumerate}
\end{prop}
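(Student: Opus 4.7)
The plan is to treat (1) and (2) separately.

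For (1), I would first identify $(\mathcal T_2,\mathcal T_2^*)_{1/2}=\ell_2$ via the Lozanovskii factorization $\mathcal T_2^{1/2}(\mathcal T_2^*)^{1/2}=\ell_2$, which expresses the defining weak Hilbert estimate $\|x\|_{\mathcal T_2}\|x\|_{\mathcal T_2^*}\sim\|x\|_2^2$ of Tsirelson's $2$-convexified space. To verify the parameters of Theorem \ref{logb}, a lower bound using Schreier-admissible unit vectors and an upper bound from cotype $2$ give $A_{\mathcal T_2}(n)\sim A_{\mathcal T_2^*}(n)\sim n^{1/2}$. Hence condition (1) of Theorem \ref{logb} fails (the two parameters coincide), while (2) and (3) hold trivially since $A_{\ell_2}(n)=n^{1/2}$ and every infinite-dimensional subspace of $\ell_2$ is $\ell_2$-isomorphic. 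Nontriviality of the derivation follows from \cite{suarez}, where the twisted Hilbert space induced by this scale is shown to be weak Hilbert but non-Hilbert, hence not isomorphic to $\ell_2$, so the exact sequence cannot split.

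For (2), Theorem \ref{interpolation} applied with $\lambda=\ell_2$ and $p_0=p_1=2$ (which kills the Kalton-Peck summand) together with Lemma \ref{frag} identifies the middle space as $\ell_2(\ell_2(A_n))=\ell_2$ with derivation of the form $\Phi(x)=\sum_n\Omega_{A_n}(x_n)e_n$. The structural point is that each $A_n=\{2^{n-1},\dots,2^n-1\}$ satisfies $|A_n|=\min A_n$, so is Schreier-admissible in one step; the implicit Tsirelson norm then gives $\tfrac{1}{2}\|y\|_1\leq\|y\|_{\mathcal T}\leq\|y\|_1$ for every $y$ supported on $A_n$ (lower bound by splitting into singletons, upper bound by induction on the support), and $2$-convexification yields $\|x\|_{\mathcal T_2}\sim\|x\|_2$ on $\mathcal T_2(A_n)$ with constants independent of $n$. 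By duality, the same equivalence holds for $\mathcal T_2^*(A_n)$. Consequently both endpoints $\ell_2(\mathcal T_2(A_n))$ and $\ell_2(\mathcal T_2^*(A_n))$ are isomorphic to $\ell_2$ with a uniform constant, so the constant function $B(x)(z)\equiv x$ is a bounded homogeneous selection from $\ell_2$ into $\mathcal F(\ell_2(\mathcal T_2(A_n)),\ell_2(\mathcal T_2^*(A_n)))$, and the associated derivation $\delta'_{1/2}B\equiv 0$ vanishes identically. The derivation of the scale is therefore trivial.

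The main obstacle is establishing the uniform equivalence $\|x\|_{\mathcal T_2}\sim\|x\|_2$ on each block $\mathcal T_2(A_n)$; once this is in hand, the constant-selector argument produces a bounded selector with zero derivative, which forces the derivation to be trivial.
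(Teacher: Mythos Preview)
Your argument for part (2) is essentially the paper's: you show the blocks $\mathcal T_2(A_n)$ are uniformly $\sqrt{2}$-isomorphic to $\ell_2^{|A_n|}$ via the Schreier-admissibility of $A_n$ in Tsirelson's space, and then take the constant function as a bounded selector so that the derivation vanishes. The paper does exactly this, only stating the inequality $\|x\|_{\mathcal T_2(A_n)}\leq\|x\|_{\mathcal T_2(A_n)^*}\leq\sqrt{2}\|x\|_2$ without your more explicit justification.

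For part (1) the routes diverge. For the $A$-parameters the paper invokes property $(H)$ of weak Hilbert spaces, which immediately yields $A_{\mathcal T_2}(n)\sim A_{\mathcal T_2^*}(n)\sim\sqrt n$; your Schreier/cotype argument is more hands-on but has a slip---cotype $2$ gives a \emph{lower} bound on $\|\sum x_i\|$ for disjoint unit vectors, not an upper one (you want $2$-convexity for $\mathcal T_2$, and for $\mathcal T_2^*$ one really needs property $(H)$ or a duality argument). The substantive difference is nontriviality of the derivation. You appeal to \cite{suarez} for the fact that the derived space is non-Hilbert; the paper instead gives a self-contained argument via Kalton's uniqueness theorem: if $\Omega_{1/2}$ were trivial it would be at bounded distance from a multiplication map $x\mapsto fx$, and then the couple $(X_\theta(e^{-\theta f}),X_\theta(e^{(1-\theta)f}))$ would reproduce $\Omega_{1/2}$ up to bounded error, forcing $\mathcal T_2^*$ to be a weighted version of $\mathcal T_2$, which it is not. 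The paper's argument is more intrinsic (it does not rely on knowing the isomorphic type of the derived space) and generalizes to arbitrary K\"othe couples; your approach is shorter when the cited result applies, but you should check that \cite{suarez} treats precisely this scale.
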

\begin{proof} Recall that a Banach space is said to have property $(H)$ \cite{pisier} if there is a function $f$ so that any $\lambda$-unconditional finite sequence $(x_1, \dots, x_N)$ verifies an estimate
$$f(\lambda)^{-1}\sqrt{N} \leq \left\|\sum_{n=1}^N x_n\right\| \leq f(\lambda)\sqrt{N}$$
and that weak Hilbert spaces enjoy property $(H)$  \cite{pisier}. Since $\mathcal T_2$ is a space with unconditional basis and property $(H)$ it must therefore verify $A_{\mathcal T_2}(n)\sim \sqrt{n}$ as well as its dual. Therefore,  the couple $(\mathcal T_2, \mathcal T_2^*)$ fails (1). Set now $\theta=1/2$ so that  $(\mathcal T_2, \mathcal T_2^*)_{1/2}$ is a Hilbert space and thus conditions (2) and (3) are obviously verified. The induced derivation $\Omega_{1/2}$ is not trivial since, otherwise, $\mathcal T_2^*$ should be a weighted version of $\mathcal T_2$. A proof for this result in complete generality valid for K\"othe spaces will appear in \cite{cfg}; a proof valid for a couple $(X_0, X_1)$ of spaces with a common unconditional basis appears mentioned without proof in \cite{caskal} and can be done as follows: from \cite[Lemma 1]{ccs} we know that  if the derivation $\Omega_\theta $ is trivial then there is a function $f\in \ell_\infty$ so that $\Omega_\theta (x) -  f x \in X_\theta$ and is bounded there. The rest is simple, just pick  $w_0=e^{- \theta f }$ and $w_1=e^{(1-\theta)f}$ and form the couple $(X_\theta(w_0), X_\theta(w_1))$ that yields
$(X_\theta(w_0), X_\theta(w_1))_\theta = X_\theta(w_0^{1-\theta}w_1^{\theta})= X_\theta$ with derivation $\log (w_0/w_1) x = fx$, obtained from the extremal $w_0^{1-z} w_1^z x$. Since this is  at bounded distance from $\Omega_\theta$, Kalton's uniqueness theorem \cite{kaltdiff} yields that $X_0, X_1$ are, up to equivalent norms, weighted version one of the other.\medskip

Assertion (2) is somewhat trivial because the spaces $\mathcal T_2(A_n)$ are uniformly isomorphic to $\ell_2^{2^ns}$. More precisely, observe that for given $x=\sum \lambda_je_j\in \ell_2(A_n)$ the constant holomorphic function $F_{A_n}(z) = x$ is an extremal since $\|x\|_{\mathcal T_2(A_n)} \leq \|x\|_{\mathcal T_2(A_n)^*} \leq \sqrt{2}\|x\|_2$. Therefore
the derivation is $0$.\end{proof}


\end{document}